\newtheorem{thm}{Theorem}[section]
\newtheorem{cor}[thm]{Corollary}
\newtheorem{defn}[thm]{Definition}
\newtheorem{theorem}{Theorem}[section]
\newtheorem{lemma}[theorem]{Lemma}
\newtheorem{proposition}[theorem]{Proposition}
\newtheorem{example}[theorem]{Example}
\def\R{\mathbb R}
\def\g{\mathfrak g}
\def\R{\mathbb R}
\def\g{\mathfrak{g}}
\def\R{\mathbb{R}}
\def\R{{\mathbb R}}
\def\L{{\cal   L}}
\def\hb#1{\hbox{#1}}
\def\exp#1{\hb{exp}(#1)}
\def\hb #1{\hbox{#1}}
\def\hb#1{\hbox{#1}}
\def\dim#1{\hb{dim}(#1)}
\def\L1#1{L^1(#1)}
\begin{document}


\title{Note on the cortex of two-step nilpotent Lie algebras }

\address[1]{ King Saud University, college of science, Department of
mathematics,\\
Riyadh, P.O Box 2455, Riyadh 11451, K.S.A.}

\author[B. Dali]{B\'echir Dali}

\email{bdali@ksu.edu.sa}

\keywords{Nilpotent and solvable Lie groups, unitary representations
of locally compact Lie groups.}

\subjclass[2000]{22E25, 22E15, 22D10}

\date{\today}

\maketitle

\begin{abstract}
In this paper, we construct an example of a family of
$4d$-dimensional two-step nilpotent Lie algebras $(\frak g_d)_{d\geq
2}$ so that the cortex of the dual of each $\frak g_d$ is a
projective algebraic set. More precisely, we show that the cortex of
each dual $\frak g_d^*$ of $\frak g_d$ is the zero set of a
homogeneous polynomial of degree $d$. This example is a
generalization of one given in "Irreducible representations of
locally compact groups that cannot be Hausdorff separated from the
identity representation" by "{\sc M.E.B. Bekka, and E. Kaniuth}".
\end{abstract}
\

\vskip 0.5 cm

\section{Introduction}

\vskip 0.5 cm The cortex of general locally compact group $G$ was
defined in \cite{Vershik} as
$$ cor (G)=\{\pi \in\widehat G,   \textrm{ which cannot be Hausdorff-separated from the
identity representation}   { \bf 1}_G\}, $$ where $\widehat G$ is
the dual of $G$ (set of unitary irreducible representations of $G$),
that is, $\pi\in cor (G)$ if and only if for all neighborhood $V$ of
${\bf 1}_G$ and for each neighborhood $U$ of $\pi$, one has $V \cap
U$ is non-empty set. Note that $\widehat G$ is equipped with the
topology of Fell which can be described in terms of weak containment
(see \cite {Fell}) and, in general, is not separated. However, if
$G$ is abelian, then $\widehat G$ is separated and hence $
cor(G)=\{{\bf 1}_G\}$. The set

When $G$ is a connected and simply connected nilpotent Lie group
with Lie algebra $\frak g$, the Kirillov theory says that $\frak
g^*/{Ad^*(G)}$ and $\widehat G$ are homomorphic, where $Ad^*(G)$
denotes the coadjoint representation of $G$ on the dual $\frak g^*$
of $\frak g$. Hence, for this class of Lie groups, $cor(G)$ can be
identified with certain $Ad^*(G)$-invariant subset of $\frak g^*$.
From \cite{Bekka}, one introduces the cortex of $\frak g^*$ as
$$
Cor(\g^*)=\{\ell=\lim_{m\rightarrow\infty}Ad^*_{s_m}(\ell_m),
\textrm  { where }
 \{s_m\}\subset G  \textrm  { and } \{\ell_m\}\subset\mathfrak g^*
\textrm  {such that } \lim_{m\rightarrow\infty}\ell_m=0 \}
$$
and we have $\pi_\ell\in cor (G)$ if and only if $\ell\in Cor(\frak
g^*)$. Note that in the case of general Lie groups, the two
definitions are not so easily related. With the above in mind, it
comes out that the set in question can be defined on $\frak
g^*/Ad^*G$ as in \cite{Boidol} and one can investigate the
parametrization of $\frak g^*$ to determine such a set.

Motivated by this situation, the authors in \cite{Boidol} define the
cortex $C_V(G)$ of a representation of a locally compact group $G$
on a finite-dimensional vector space $V$ as the set of all $v\in V$
for which $G.v$ and $\{0\}$ cannot be Hausdorff-separated in the
orbit-space $V/G$. They give a precise description of $C_V(G)$ in
the case $G=\R$. Moreover, they consider the subset $IC_V(G)$ of $V$
consisting of the common zeroes of all $G$-invariant polynomials $p$
on $V$ with $p(0)=0$. They show that $IC_V(G)=C_V(G)$ when $G$ is a
nilpotent Lie group of the form $G=\R\ltimes\R^n$ and $V=\frak g^*$
the dual of the Lie algebra $\frak g$. This fails for a general
nilpotent Lie group, even in the case of two-step nilpotent Lie
group, in \cite{Bekka}, the authors show that the cortex of any
two-step nilpotent Lie algebra $\g$ is the closure of the set:
$$
\{ad^*_X(\ell),\,X\in \mathfrak{g},\, \ell\in
{\mathfrak{g}}^*\,\}\qquad \qquad (1).
$$
and they give a counter-example of $8$-dimensional Lie algebra
$\frak g$ for which $Cor(\frak g^*)\subsetneq ICor(\frak g^*)$. In
\cite{Bak1}, the author gives an explicit description of $Cor(\frak
g^*)$ when $\frak g$ is a nilpoten Lie algebra with $\dim{\frak
g}\leq 6$ by the means of parametrization of the coadjoint orbits in
$\frak g^*$. In \cite{Dali} one gives an explicit description of the
cortex of ceratin class of exponential Lie algebras $\frak g$ by
investigating the tools of parametrization of the coadjoint orbits
in the dual space $\frak g^*$ (see for example \cite{ABCD, ACD}).

Fixing the class of two-step nilpotent Lie algebras, for any $\ell$
belonging to the dual space of a such Lie algebra, its coadjoint
orbit $\mathcal O$ satisfies
$$ \mathcal O=\{\ell\}+T_\ell\mathcal O,
$$ where $T_\ell\mathcal O$ is the tangent space to the orbit $\mathcal O$ at $\ell$.
From this, we deduce that each coadjoint orbit is a flat (affine)
symplectic manifold. Combining this with the property (1), we
intuitively think that the cortex of such class of Lie algebras
needs not to be far from affine sets however in this paper, we show
that for this class of Lie algebras the cortex can be very far from
affine or flat sets. Indeed, we give a generalization of the example
given in \cite{Bekka} pp. $210$, our example is a family of
$4d$-dimensional two-step nilpotent Lie algebras $(\frak g_d)_{d\geq
2}$ such that the cortex of each $\frak g_d^*$ is the zero set of a
homogeneous polynomial of degree $d$ in the complement $\frak
z_d^\perp$ of the center $\frak z_d$ of $\frak g_d$.

The paper is organized as follows, in Section 2, we give the
definition of the main tools of cortex and we recall some essential
results. In section 3, we give the main example and we conclude the
paper by some conjecture about the optimality of the family $(\frak
g_d)_{d\geq 2}$.

\vskip 0.5cm

\section{Preliminaries}

\subsection{ Definitions and notations}

\vskip 0.5cm

We begin by setting some notations and useful facts which will be
used throughout the paper. This material is quite standard.
Throughout, $G$ will always denote an $n$-dimensional connected and
simply connected two-step nilpotent Lie group with (real) Lie
algebra $\frak g$ so that $[\g,[\g,\g]]=0$. We denote by $\frak z$
the center of $\frak g$ and $\g^*$ denotes the dual of $\g$. Let us
denote $\frak z^\perp$ to be
$$
\frak z^\perp=\{\ell\in\frak g^*: \ell(Z)=0\quad \forall Z\in\frak
z\}. $$ $G$ acts on $\frak g$ by the adjoint action denoted by $Ad$
and on $\frak g^*$ by the coadjoint action denoted by $Ad^*$. Since
$G$ is nilpotent then the exponential mapping from $\frak g$ onto
$G$ is a global diffeomorphism, and we can write $Ad_{\exp X}=e^{ad
X}$ where $adX(Y)=[X,Y]$. Since $\frak g$ is two-step nilpotent Lie
algebra, one has $e^{ad X}=Id_{\frak g}+adX$, and hence the
coadjoint action of $G$ on the dual space $\frak g^*$ of $\frak g$
is given by $Ad^*_{\exp X}=Id_{\frak g^*}+ad^*X$ where $ad^*$ is the
coadjoint action of $\frak g$ on $\frak g^*$. As a consequence, for
any $\ell\in\frak g^*$, if ${\mathcal O}=Ad^*(G)\ell$ denotes the
coadjoint orbit of $\ell$, then
$$
{\mathcal O}=\{\ell\}+T_\ell\mathcal O,
$$
where $T_\ell\mathcal O$ is the tangent space of ${\mathcal O}$ at
$\ell$. Thus we see that the coadjoint orbits in two-step nilpotent
Lie algebras are flat symplectic manifolds.

Following \cite{Boidol}, we recall the following:
\begin{defn}\label{df1} Let $\ell\in\mathfrak{g}^*  $.  We define the cortex of $\frak g^*$ as
$$
Cor(\mathfrak g^*)=\{\lim_{m\rightarrow\infty} Ad^*_{s_m}(\ell_m)~|~
 (s_m)_m\subset G,~(\ell_m)_m\subset\frak g^* \textrm { with }
 \lim_{m\rightarrow\infty} \ell_m=0\}
$$
\end{defn}
As a consequence of this definition, since $\frak g$ is two-step
nilpotent Lie algebra, then we can write
$$
Cor(\mathfrak g^*)=\{\lim_{\ell\rightarrow
0}ad^*_{X_n}(\ell_n),\quad (X_n)_n\subset\frak g, (\ell_n)_n\subset
g^*\}=\lim_{\ell\rightarrow 0}T_\ell\mathcal O.$$
 In \cite{Bekka} there is a study of the cortex of several
locally compact groups and especially the case of connected Lie
groups. In particular, when $G$ is a connected two-step nilpotent
Lie group, one has the following:
\begin{proposition} \label{2-step} Let $\frak g$ be a nilpotent Lie
algebra of class $2$ (i.e, $[\frak g,[\frak g,\frak g]]=0$), and let
$G=\exp{\frak g}$ be the associated Lie group. Denote by $ad^*$ the
coadjoint representation of $\frak g$ on $\frak g^*$. Let $f\in\frak
g^*$. Then the corresponding representation $\pi_f$ of $G$ belongs
to $cor(G)$ if and only if $f$ belongs to the closure of the subset
$\{ ad_X^*(\ell), X\in\frak g,\ell\in\frak g^*\}$ of $\frak g^*$.
\end{proposition}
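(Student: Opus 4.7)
The plan is to combine Kirillov's correspondence with the special form of $Ad^*$ for $2$-step nilpotent groups, and then use a simple bilinear scaling trick to identify the two descriptions.

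First, I would invoke the fact (stated in the introduction after Definition \ref{df1}) that $\pi_f \in cor(G)$ if and only if $f\in Cor(\frak g^*)$. Thus the proposition reduces to the set-theoretic identity
\[
Cor(\frak g^*)\;=\;\overline{\{\,ad^*_X(\ell)\,:\,X\in\frak g,\ \ell\in\frak g^*\,\}}.
\]
For this, I would exploit the $2$-step hypothesis to get the explicit formula $Ad^*_{\exp X}=\Id_{\frak g^*}+ad^*_X$. Writing $s_m=\exp X_m$ in Definition \ref{df1}, this gives $Ad^*_{s_m}(\ell_m)=\ell_m+ad^*_{X_m}(\ell_m)$, and since $\ell_m\to 0$ the $\ell_m$ term is negligible. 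Therefore
\[
Cor(\frak g^*)=\{\lim_{m\to\infty}ad^*_{X_m}(\ell_m)\,:\,X_m\in\frak g,\ \ell_m\in\frak g^*,\ \ell_m\to 0\}.
\]

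Next I would prove the two inclusions.

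\emph{Containment in the closure.} Any $f\in Cor(\frak g^*)$ is a limit of elements of the form $ad^*_{X_m}(\ell_m)$, so in particular it lies in the closure of $\{ad^*_X(\ell)\}$. This direction is immediate from the reformulation above.

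\emph{Containment of the closure.} It suffices to show $\{ad^*_X(\ell)\}\subset Cor(\frak g^*)$ and that $Cor(\frak g^*)$ is closed. For the first, fix $X\in\frak g$ and $\ell\in\frak g^*$, and use the bilinearity of $(X,\ell)\mapsto ad^*_X(\ell)$: setting $X_m=mX$ and $\ell_m=\ell/m$ yields $ad^*_{X_m}(\ell_m)=ad^*_X(\ell)$ for every $m$ while $\ell_m\to 0$. Hence $ad^*_X(\ell)\in Cor(\frak g^*)$. For closedness, a standard diagonal argument applies: if $f^{(n)}\in Cor(\frak g^*)$ with $f^{(n)}\to f$, choose for each $n$ an approximation $ad^*_{X_{n,m_n}}(\ell_{n,m_n})$ within $1/n$ of $f^{(n)}$ and with $\|\ell_{n,m_n}\|\le 1/n$; the diagonal sequence realizes $f$.

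The scaling trick is the only real content; the rest is formula manipulation. The main obstacle, if any, is being careful that the rewriting of $Cor(\frak g^*)$ really drops the $\ell_m$ contribution (which requires precisely the $2$-step identity $Ad^*_{\exp X}=\Id+ad^*_X$, and no higher-order terms), and that the set $Cor(\frak g^*)$ is genuinely closed so that the closure of $\{ad^*_X(\ell)\}$ is captured, not merely the set itself.
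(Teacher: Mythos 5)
Your argument is correct, and it is essentially the standard proof of this statement from Bekka--Kaniuth. Note, however, that the paper itself offers no proof to compare against: the proposition is simply quoted from \cite{Bekka}. Your reduction to the identity $Cor(\frak g^*)=\overline{\{ad^*_X(\ell): X\in\frak g, \ell\in\frak g^*\}}$, the scaling trick $ad^*_{mX}(\ell/m)=ad^*_X(\ell)$, and the diagonal argument for closedness are all sound, and the reformulation of $Cor(\frak g^*)$ is legitimate because $ad^*_{X_m}(\ell_m)=Ad^*_{\exp X_m}(\ell_m)-\ell_m$ converges exactly when $Ad^*_{\exp X_m}(\ell_m)$ does, given $\ell_m\to 0$. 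The one place where you lean on something genuinely deep is the very first step, namely the equivalence $\pi_f\in cor(G)\Leftrightarrow f\in Cor(\frak g^*)$: this requires that the Kirillov bijection $\frak g^*/Ad^*(G)\to\widehat G$ be a homeomorphism (Brown's theorem), not merely a bijection, so that non-Hausdorff-separability of $\pi_f$ from ${\bf 1}_G$ in the Fell topology translates into the orbit-limit statement defining $Cor(\frak g^*)$. The paper asserts this equivalence in its introduction without proof, so you are on the same footing as the author, but in a self-contained write-up you should cite it explicitly rather than treat it as a formality; it is where all the representation-theoretic content of the proposition lives. Everything after that step is, as you observe, bilinear-algebra manipulation.
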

In \cite{Dali}, the author shows the following:
\begin{lemma}\label{cortex3}
If  $\g$  is a two-step nilpotent Lie algebra, and  if  the
coadjoint orbits have codimension 0 or 1 in $\frak z^\perp$, then
$$
Cor(\g^*)=\mathfrak z^\perp.
$$
\end{lemma}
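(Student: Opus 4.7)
The plan is to prove the two inclusions $\mathrm{Cor}(\g^*) \subseteq \z^\perp$ and $\z^\perp \subseteq \mathrm{Cor}(\g^*)$ separately, using Proposition~\ref{2-step} to identify $\mathrm{Cor}(\g^*)$ with the Euclidean closure of $\{ad_X^*(\ell) : X \in \g,\, \ell \in \g^*\}$.

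The first inclusion is immediate: for any $X \in \g$, $\ell \in \g^*$ and $Z \in \z$, the identity $ad_X^*(\ell)(Z) = -\ell([X,Z]) = 0$ places $\{ad_X^*(\ell)\}$ inside the closed subspace $\z^\perp$. For the reverse inclusion I would split on the codimension hypothesis. If some orbit has codimension $0$ in $\z^\perp$, i.e.\ there is $\ell_0$ with $T_{\ell_0}\mathcal{O}_{\ell_0} = \z^\perp$, then every $f \in \z^\perp$ equals $ad_{X_0}^*(\ell_0)$ for some $X_0 \in \g$ and so lies in $\mathrm{Cor}(\g^*)$ by Proposition~\ref{2-step}. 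The substantive case is when every (nonzero) generic orbit has codimension exactly $1$: here $P_{\bar\ell} := T_\ell\mathcal{O}_\ell$ is a hyperplane in $\z^\perp$ depending only on $\bar\ell := \ell|_\z \in \z^*$, and the task is to show $\overline{\bigcup_{\bar\ell \in \z^*} P_{\bar\ell}} = \z^\perp$.

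My first step in this hyperplane case is to rule out that the family $\bar\ell \mapsto P_{\bar\ell}$ is constant: if every $P_{\bar\ell}$ coincided with a fixed hyperplane $\{f \in \z^\perp : f(Y_0) = 0\}$ for some $Y_0 \in \g$, then $\bar\ell([X,Y_0]) = 0$ for all $X \in \g$ and all $\bar\ell \in \z^*$ would force $[X,Y_0] = 0$, whence $Y_0 \in \z$; but then the defining hyperplane is all of $\z^\perp$, contradicting its properness. With non-constancy in hand, I would perform a dimension count on the incidence variety $I = \{(f,\bar\ell) : f \in P_{\bar\ell}\} \subseteq \z^\perp \times \z^*$ (whose generic fibre over $\z^\perp$ is cut out by the single polynomial equation $\phi_{\bar\ell}(f)=0$ in $\bar\ell$) and conclude that $S := \bigcup_{\bar\ell} P_{\bar\ell}$ has full dimension $\dim \z^\perp$, so $S$ is a full-dimensional semialgebraic cone.

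The main obstacle is the final step: upgrading full-dimensionality of $S$ to the Euclidean density $\overline{S} = \z^\perp$ over $\R$, which is delicate because a real semialgebraic set of full dimension need not be Euclidean dense. My plan is to exploit that $S$ is a union of \emph{linear} subspaces through the origin, parametrized (up to scaling) by the \emph{compact} projective space $\mathbb{P}(\z^*)$; by continuity the map $\bar\ell \mapsto P_{\bar\ell}$ extends to the Grassmannian of hyperplanes in $\z^\perp$, and its image has compact closure there. Combining the cone/linearity structure of each fibre with this compact parametrization, any proper closed cone containing all these hyperplanes would have to be a finite union of them — which by continuity forces constancy, already excluded — and this should give $\overline{S} = \z^\perp$, completing the proof.
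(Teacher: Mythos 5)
First, a point of comparison: the paper does not actually prove this lemma --- it is quoted from \cite{Dali} without proof --- so there is no internal argument to measure yours against, and your proposal has to stand on its own. Much of it does: the inclusion $Cor(\g^*)\subseteq\z^{\perp}$, the reduction via Proposition \ref{2-step} to the closure of $S=\{ad^*_X(\ell):X\in\g,\ \ell\in\g^*\}$, the observation that $T_{\ell}\mathcal O$ depends only on $\bar\ell=\ell|_{\z}$ (indeed only on $\ell|_{[\g,\g]}$), the codimension-$0$ case, and the non-constancy argument for the hyperplane family are all correct.

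The gap sits exactly where you flagged ``the main obstacle,'' and your proposed fix does not close it. The claim that any proper closed cone containing all the hyperplanes $P_{\bar\ell}$ must be a finite union of them is false. In $\R^3$ the planes $H_t=\{(x,y,z):\ z=x\cos t+y\sin t\}$, $t\in[0,2\pi)$, form a compact, continuously varying, non-constant family of hyperplanes through the origin, and yet
$$
\bigcup_{t}H_t=\{(x,y,z):\ z^2\le x^2+y^2\}
$$
is a proper, closed, full-dimensional cone that is the union of infinitely many distinct hyperplanes and is not dense in $\R^3$. So ``non-constant $+$ compactly parametrized $+$ full-dimensional union of linear subspaces'' does not imply Euclidean density, and nothing in your argument rules out that the family $\bar\ell\mapsto P_{\bar\ell}$ behaves like this light-cone example. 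A correct proof of the codimension-$1$ case must therefore use the Lie-algebraic origin of the hyperplanes, not just their abstract geometry: writing $P_{\bar\ell}=\g(\ell)^{\perp}$ with $\g(\ell)/\z=\R\,v_{\bar\ell}$ one-dimensional, the statement to prove is that the real projective family $\{[v_{\bar\ell}]\}\subset\mathbb{P}(\g/\z)$ of stabilizer directions meets (the projectivization of) $\ker f$ for a dense set of $f\in\z^{\perp}$ --- equivalently, that $f$ vanishes on the centralizer of some non-central element. The counterexample above shows this can fail for an arbitrary real family of lines $[v_{\bar\ell}]$, so the defining relation $\bar\ell([v_{\bar\ell}],\g])=0$ has to enter the argument in an essential way; as it stands, your proof does not use it.
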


 Since the coadjoint orbits in two-step
nilpotent Lie algebras are affine, we naturally think that the
cortex of any two-step nilpotent Lie algebra will be an affine
subset of $\frak z^\perp$. Again, in \cite{Bekka} it was given an
interesting example of a two-step nilpotent Lie algebra $\frak g$ so
that the corresponding cortex in $\frak g^*$ is a projective
algebraic set given by a quadric, let us recall it.
\begin{example}
Let $\frak g$ be the Lie algebra of dimension $8$ with basis
$(X_1,\dots X_6,Z_1,Z_2)$ and nontrivial commutators
$$
[X_1,X_5]=[X_2,X_3]=Z_1, [X_1,X_6]=[X_2,X_4]=Z_2.$$ Then $\frak g$
is nilpotent of class $2$, and it is easily verified that the center
$\frak z$ of $\frak g$ equals $\mathbb R Z_1+\mathbb R Z_2$. Let
$(X_1^*,\dots, X_6^*, Z_1^*, Z_2^*)$ denote the corresponding basis
of $\frak g^*$, and let $G=\exp{\frak g}$. It follows by Proposition
\ref{2-step} that $$ cor(G)=\{\pi_f\in\widehat G;
f=\sum_{i=1}^6t_iX_i^* \text { with } t_3t_6=t_4t_5\},
$$
or equivalently,
$$
Cor(\frak g^*)=\{f\in\frak z^\perp : f=\sum_{i=1}^6t_iX_i^* \text {
with } t_3t_6=t_4t_5\}.
$$
\end{example}
In this example, it was shown that the set of invariant polynomials
on $\frak g^*$ is
$$
Pol(\frak g^*)^G=\mathbb R[z_1,z_2, z_1x_4-z_2x_3,z_1x_6-z_2x_5],
$$
and
$$
ICor(\frak g^*)=\{\ell\in\frak g^*: P(\ell)=P(0),\quad\forall P\in
Pol(\frak g^*)^G\}=\frak z^\perp.
$$
Then $Cor(\frak g^*)\varsubsetneq ICor(\frak g^*)$. Thus the cortex
of $G$ is strictly contained in the set of all irreducible
representations of $G$ having a trivial infinitesimal charcter.

Up to writing this note, I don't see any example of two-step
nilpotent Lie algebra $\frak g$ so that that $Cor(\frak g^*)$ is the
zero set of polynomials whose degree is grater then $2$. In the
sense that we think $Cor(\frak g^*)$ is the zero set of polynomials
whose degree is equal of less then $2$.

\section{Main example}

 Let $d$ be an integer with
$d\geq 2$ and $\frak g_d$ be the Lie algebra whose Jordan-H\"{o}der
basis
$$\mathcal B=(Z_1,\dots, Z_d, Y_1,Y_2,\dots,
Y_{2d-1},Y_{2d},X_1,\dots, X_d),
$$ and nontrivial brackets
$$
[X_i,Y_{2i-1}]=Z_1, i=1,\dots,d, [X_k,Y_{2k}]=Z_{k+1},
k=1,\dots,d-1, [X_{2d},Y_{2d}]=Z_2+\dots+Z_d.
$$
\begin{proposition}\label{parametrization}
For each Lie algebra $\frak g_d$ ($d\geq 2$), one has

\begin{itemize}

\item[(i).] The generic orbits are $2d$ dimensional affine manifold defined by the minimal layer
$$
\Omega_d=\{\ell\in\frak g_d^*: \ell(Z_1)\neq 0\}.
$$

\item[(ii).] Any coadjoint orbit $\mathcal O$ of $\ell\in\Omega_d$ is
given by
$$
\mathcal
O=G\cdot\ell=\{\prod_{i=1}^dAd^*_{\exp{t_iY_{2i-1}}}Ad^*_{\exp{s_iX_i}}(\ell),\quad
(t_1,\cdots,t_d,s_1,\cdots,s_d)\in\mathbb R^{2d}\}.
$$

\item[(iii).] The algebra of $G$-invariant polynomials is
$$
Pol(\frak g_d^*)^G=\mathbb
R[z_1,\cdots,z_d,z_1y_2-z_2y_1,\cdots,z_1y_{2d-2}-z_{d-1}y_{2d-3},z_1y_{2d}-(z_2+\cdots+z_d)y_{2d-1}].
$$
\end{itemize}
\end{proposition}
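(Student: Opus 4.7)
The proposition reduces to a coordinate computation in the basis $\mathcal B$. Write $\ell\in\g_d^*$ in dual coordinates $(z_i,y_j,x_k)$. Since $\g_d$ is two-step nilpotent, $ad^*_U\circ ad^*_V=0$, so $Ad^*_{\exp V}=Id+ad^*_V$; consequently for any $V_1,\ldots,V_k\in\g_d$ the product $Ad^*_{\exp V_1}\cdots Ad^*_{\exp V_k}(\ell)=\ell+\sum_j ad^*_{V_j}\ell$ is symmetric in the $V_j$'s. The nontrivial coordinate shifts are: $Ad^*_{\exp sX_i}$ sends $y_{2i-1}\mapsto y_{2i-1}-sz_1$ and $y_{2i}\mapsto y_{2i}-sz_{i+1}$ for $i<d$ (with $z_2+\cdots+z_d$ replacing $z_{d+1}$ when $i=d$); $Ad^*_{\exp tY_{2i-1}}$ sends $x_i\mapsto x_i+tz_1$; and $Ad^*_{\exp tY_{2i}}$ sends $x_i\mapsto x_i+tz_{i+1}$ (or its $i=d$ analogue).

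\emph{Parts (i) and (ii).} The tangent space $T_\ell\mathcal O=\mathrm{span}\{ad^*_V\ell:V\in\g_d\}$ splits cleanly, since the $ad^*_{X_i}\ell$'s lie in the $Y^*$-block and the $ad^*_{Y_j}\ell$'s lie in the $X^*$-block. On $\Omega_d$, the $d$ covectors $ad^*_{Y_{2i-1}}\ell=z_1X_i^*$ already span the entire $d$-dimensional $X^*$-block (making every $ad^*_{Y_{2i}}\ell$ redundant), while the $d$ covectors $ad^*_{X_i}\ell$, supported on pairwise disjoint coordinate pairs $(Y_{2i-1}^*,Y_{2i}^*)$ with nonzero first coefficient $-z_1$, are linearly independent in the $Y^*$-block. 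Hence $\dim T_\ell\mathcal O=2d$ throughout $\Omega_d$; by lower semicontinuity this is the generic, hence maximum, orbit dimension, proving (i). The same enumeration shows that $\{ad^*_{X_i}\ell,\,ad^*_{Y_{2i-1}}\ell\}_{i=1}^d$ is a basis of $T_\ell\mathcal O$, so combining the product formula above with the flatness relation $\mathcal O=\{\ell\}+T_\ell\mathcal O$ recalled in Section 2 yields the parametrization in (ii) by $(s_1,\ldots,s_d,t_1,\ldots,t_d)\in\R^{2d}$.

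\emph{Part (iii).} $G$-invariance of $P\in\mathrm{Pol}(\g_d^*)$ is equivalent to annihilation by every one-parameter subgroup, which via the shifts above becomes the following system of linear derivation equations: each $Y_j$-action gives $c_j\,\partial_{x_i}P=0$ for some nonzero $c_j\in\R[z_1,\ldots,z_d]$, forcing $\partial_{x_i}P\equiv0$ for every $i$; each $X_i$-action gives $z_1\partial_{y_{2i-1}}P+z_{i+1}\partial_{y_{2i}}P=0$ for $i<d$ and $z_1\partial_{y_{2d-1}}P+(z_2+\cdots+z_d)\partial_{y_{2d}}P=0$ for $i=d$. Thus $P$ involves no $x_k$, and each surviving equation acts only on the two-variable subring $\R[z_1,\ldots,z_d][y_{2i-1},y_{2i}]$. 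The substitution $v_i:=z_1y_{2i}-z_{i+1}y_{2i-1}$ (resp.\ its $i=d$ analogue) triangularizes the derivation into $z_1\partial_{y_{2i-1}}$, whose polynomial kernel over $\R[z_1,\ldots,z_d]$ is $\R[z_1,\ldots,z_d][v_i]$. Assembling the $d$ disjoint pairs produces the claimed $\R[z_1,\ldots,z_d,v_1,\ldots,v_d]$. The one technical subtlety I anticipate is this assembly: one must verify that kernel generators from disjoint pairs combine into a polynomial (rather than merely rational) presentation of the whole invariant algebra, which follows because the different $v_i$'s involve disjoint sets of $y$-indeterminates over the common subring $\R[z_1,\ldots,z_d]$.
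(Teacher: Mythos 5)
Your proposal is correct, but it reaches the conclusion by a genuinely different route than the paper. The paper's own proof is essentially an appeal to the stratification machinery of \cite{ABCD}: it identifies $\Omega_d$ as the minimal layer through its set of jump indices ${\bf e_d}=\{d+1,d+3,\dots,3d-1\}\cup\{3d+1,\dots,4d\}$ (whose cardinality $2d$ gives the orbit dimension) together with the cross-section $\Sigma_d$, and then asserts that (ii) and (iii) follow ``by the same methods.'' You instead give a self-contained coordinate computation: the explicit $ad^*$-shifts, the observation that $\{ad^*_{Y_{2i-1}}\ell,\,ad^*_{X_i}\ell\}_{i=1}^{d}$ is a basis of $T_\ell\mathcal O$ on $\Omega_d$ combined with two-step flatness for (i)--(ii), and the reduction of $G$-invariance to the derivations $z_1\partial_{x_i}$ and $z_1\partial_{y_{2i-1}}+z_{i+1}\partial_{y_{2i}}$ (with $z_2+\cdots+z_d$ for $i=d$) for (iii). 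This is more elementary and, frankly, more complete than what is printed, and it matches the coordinate formulas the paper later uses in the proof of the Theorem. Two small refinements are worth recording. First, lower semicontinuity does show $2d$ is the maximal orbit dimension, but $2d$-dimensional orbits also occur on $\{z_1=0\}$ (take all other $z_i\neq0$); what singles out $\Omega_d$ is its jump-index set, i.e.\ the layer, not the dimension alone --- harmless here, since only the density of $\Omega_d$ and the parametrization (ii) are used downstream. Second, in (iii) the identity $\ker\bigl(z_1\partial_{y_{2i-1}}\bigr)\cap\R[z,y]=\R[z][v_i]$ itself, and not only the assembly of the $d$ pairs, requires the clear-denominators-and-reduce-mod-$z_1$ induction: one uses that $v_i\equiv -z_{i+1}y_{2i-1}$ (resp.\ $-(z_2+\cdots+z_d)y_{2d-1}$) modulo $z_1$ and that these coefficients are nonzero in the domain $\R[z]/(z_1)$; since the reductions involve pairwise disjoint $y$-variables, the same induction settles all $d$ pairs, and hence the assembly, simultaneously.
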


\begin{proof}
\begin{itemize}

\item[(i).] Fixing the Jordan H\"{o}lder basis $\mathcal B_d$ and denote
$\mathcal B_d=(U_1,\dots,U_{4d})$ and $\mathcal
B^*_d=(U_1^*,\dots,U_{4d}^*)$ its dual basis with
$$ U_i=\left\{
      \begin{array}{ll}
        Z_i, & \hbox{ if} 1\leq i\leq d,\\
        Y_{i-d}, & \hbox{if } d+1\leq i\leq 3d;\\
        X_{i-3d}, & \hbox{if }3d+1\leq i\leq 4d\\
              \end{array}
    \right.
$$
Using the methods of \cite{ABCD}, we can see that the minimal layer
in $\frak g^*_d$ is
$$
\Omega_d=\{\ell\in\frak g^*: \ell(U_1)=\ell(Z_1)\neq 0\}, $$ and it
corresponds to the set of jump indices ${\bf e_d}={\bf i_d}\cup{\bf
j_d}$ with ${\bf i_d}=\{d+1<d+3<\dots<3d-1\}$ and ${\bf
j_d}=\{3d+1,3d+2,\dots,4d\}$. The cross-section $\Sigma_d$ is given
by
$$
\Sigma_d=(\sum_{k\notin{\bf e}}\mathbb
RU_k^*)\cap\Omega=\left(\sum_{k=1}^d\mathbb RZ_k^*+\mathbb
RY_{2k}^*\right)\cap\Omega_d.
$$
Concerning (ii) and (iii), they can be easily showed by using also
the methods of \cite{ABCD} in the parametrization of coadjoint
orbits.

\end{itemize}

\end{proof}
\begin{theorem}
Let $(Z_1^*,\dots, Z_d^*,Y_1^*,\dots, Y_{2d}^*,X_1^*,\dots,X_d^*)$
be the corresponding dual basis in $\frak g_d^*$. The cortex of
$\frak g_d^*$ is the dual of the Lie algebra $\frak g_d$ is a
projective set. More precisely, if we denote
$\ell=\sum_{i=1}^d(z_iZ_i^*+x_iX_i^*)+\sum_{j=1}^{2d}y_jY_j^*\in\frak
g^*$ by $\ell=(z_i,y_j,x_k)$ then $Cor(\frak g_d^*)$ is the
projective algebraic set given by
$$
Cor(\frak g_d^*)=\{\ell=(z_i,y_j,x_k):~z_1=\dots=z_d=
y_{2d-1}(\sum_{i=1}^{d-1}y_{2i}\prod_{j=1, j\neq
i}^{d-1}y_{2j-1})-y_{2d}\prod_{j=1}^{d-1}y_{2j-1}=0\}.
$$
\end{theorem}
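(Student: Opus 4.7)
The plan is to apply Proposition \ref{2-step}: since $\frak g_d$ is two-step nilpotent,
\[
Cor(\frak g_d^*) = \overline{S_d},\qquad S_d := \{ad^*_X(\ell) \,:\, X \in \frak g_d,\ \ell \in \frak g_d^*\}.
\]
Centrality of $\frak z_d$ immediately gives $S_d \subseteq \frak z_d^\perp$, accounting for $z_1 = \cdots = z_d = 0$. It remains to show that inside $\frak z_d^\perp$ the closure $\overline{S_d}$ coincides with the zero locus of
\[
P(y) \;=\; y_{2d-1}\Bigl(\sum_{i=1}^{d-1} y_{2i}\prod_{\substack{j=1\\ j\neq i}}^{d-1} y_{2j-1}\Bigr) - y_{2d}\prod_{j=1}^{d-1} y_{2j-1}.
\]

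The first concrete step is the explicit computation of $\mu = ad^*_X(\ell)$ for $X = \sum_i a_i X_i + \sum_j b_j Y_j + \sum_k c_k Z_k$. Using only the listed brackets of $\frak g_d$ yields
\begin{align*}
\mu(Y_{2i-1}) &= -a_i z_1 & &(1\le i\le d),\\
\mu(Y_{2i}) &= -a_i z_{i+1} & &(1\le i\le d-1),\\
\mu(Y_{2d}) &= -a_d(z_2+\cdots+z_d),
\end{align*}
together with $\mu(X_k) = b_{2k-1} z_1 + b_{2k} z_{k+1}$ for $k<d$ and $\mu(X_d) = b_{2d-1} z_1 + b_{2d}(z_2+\cdots+z_d)$. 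Substituting these into $P$, the identity $\sum_{i=1}^{d-1} z_{i+1} = z_2+\cdots+z_d$ forces the two terms of $P(\mu)$ to coincide, so $P(\mu) = 0$ identically. Hence $S_d \subseteq V := \{z = 0,\ P(y) = 0\}$, and since $V$ is closed, $Cor(\frak g_d^*) \subseteq V$.

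For the reverse inclusion I would work on the open locus $U = \{y \,:\, y_{2i-1}\ne 0\ \text{for all}\ 1\le i\le d\}$ and show $V\cap U\subseteq S_d$ by direct construction. Given $\mu = (0,y,x)\in V\cap U$, set $z_1 = 1$, $a_i = -y_{2i-1}$ for $1\le i\le d$, $z_{i+1} = y_{2i}/y_{2i-1}$ for $1\le i\le d-1$, and $b_{2k-1} = x_k$, $b_{2k} = 0$ for $1\le k\le d$. Then the equations for $\mu(Y_{2i-1})$, $\mu(Y_{2i})$ and $\mu(X_k)$ hold tautologically, and the single remaining equation $\mu(Y_{2d}) = y_{2d}$ becomes exactly $P(y) = 0$ after clearing the denominator $\prod_{j=1}^{d-1} y_{2j-1}$. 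Thus $V\cap U\subseteq S_d$, and the theorem reduces to density of $V\cap U$ in $V$.

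The main obstacle is this density claim. I would establish it by showing $P$ is irreducible in $\mathbb{R}[y_1,\dots,y_{2d}]$: viewed as a polynomial of degree one in $y_{2d}$, its two coefficients $\prod_{j=1}^{d-1} y_{2j-1}$ and $\sum_{i=1}^{d-1} y_{2i}\prod_{j\ne i} y_{2j-1}$ are coprime, because the monomial $y_{2i}\prod_{j\ne i} y_{2j-1}$ of the sum is free of $y_{2i-1}$, so no single $y_{2j-1}$ divides the sum. Consequently no $y_{2j-1}$ divides $P$, and each $V\cap\{y_{2j-1}=0\}$ is a proper subvariety of $V$ of strictly smaller dimension. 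Since $V$ is an irreducible real hypersurface of the expected real dimension $3d-1$ (witnessed by the parametrization $y_{2d} = y_{2d-1} A/B$ over $\{B\ne 0\}\times\mathbb{R}^d$), the open locus $V\cap U$ is Euclidean-dense in $V$, completing the argument. An elementary alternative bypassing irreducibility is a direct perturbation: given $\mu\in V$ with some $y_{2j_0-1}=0$, perturb $y_{2j_0-1}$ to $1/n$ (and, when several $y_{2j-1}$'s vanish simultaneously, introduce auxiliary perturbations of $y_{2j_0}$ or $y_{2d-1}$ with matching scales) while re-solving $P=0$ for $y_{2d}$ to produce an explicit approximating sequence in $V\cap U\subseteq S_d$.
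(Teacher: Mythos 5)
Your proposal follows the same route as the paper's own argument: reduce via Proposition \ref{2-step} to the closure of $\{ad^*_X(\ell)\}$, compute the coordinates of $ad^*_X(\ell)$ explicitly, check that the degree-$d$ polynomial $P$ vanishes on this set, and invert the parametrization on the open locus where the odd coordinates $y_{2j-1}$ are nonzero. In fact you are more complete than the paper, whose proof stops after displaying the coordinates of $T_\ell\mathcal O$ and never verifies either inclusion, nor the passage to the closure.

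The one step in your write-up that would fail as stated is the density argument based on irreducibility. Over $\R$, irreducibility of $P$ does not imply that the complement of a proper subvariety is Euclidean-dense in the real zero set: for the Whitney umbrella $x^2-y^2z$ the real locus contains the half-line $\{x=y=0,\,z<0\}$, which is not in the closure of the points with $y\neq 0$. So ``$V\cap\{y_{2j-1}=0\}$ has smaller Zariski dimension'' does not by itself give Euclidean density of $V\cap U$ in $V$. Your ``elementary alternative'' is the correct fix, but it is exactly where the real work lies and it needs the case analysis spelled out. Concretely: if $J=\{j\le d-1: y_{2j-1}=0\}$ is nonempty, then $y_{2d}$ is unconstrained at such a point (its coefficient $\prod_j y_{2j-1}$ vanishes), so after perturbing the $y_{2j-1}$, $j\in J$, to small nonzero values one must restore the value of $y_{2d}$ by making $y_{2d-1}\sum_i y_{2i}^{(n)}/y_{2i-1}^{(n)}$ converge to the prescribed $y_{2d}$; the divergent terms $y_{2j}/y_{2j-1}^{(n)}$ with $y_{2j}\neq 0$ must be cancelled either against each other (choosing matched rates, possible when $|J|\ge 2$) or by perturbing the paired even coordinate $y_{2j}$, and when $y_{2d-1}=0$ one must additionally perturb $y_{2d-1}$ itself (note that $P=0$ forces compatibility in that case, e.g.\ for $|J|=1$ it forces $y_{2d-1}y_{2j_0}=0$, which is what makes the perturbation possible). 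Once this is written out the theorem does follow; but as submitted the density step is a genuine gap, albeit one the paper itself leaves entirely unaddressed.
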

\begin{proof}
Note that since $\frak g_d$ is exponential Lie algebra and
$\Omega_d$ is dense in $\frak g_d^*$ (Zariski open set)then
$$
Cor(\frak g_d^*)=\{\lim_mAd^*_{\exp{X_m}}\ell_m,~ (\ell_m)\in\frak
g_d^* ,~~ (\ell_m)_m\in\Omega_d, \text { and } \lim_m\ell_m=0\}.$$
On other hand each coadjoint orbit in $\Omega$ corresponds to the
set of jump indices ${\bf e_d}$ and satisfies
$$
\mathcal
O=G\cdot\ell=\{\prod_{i=1}^dAd^*_{\exp{t_iY_{2i-1}}}Ad^*_{\exp{s_iX_i}}(\ell),\quad
(t_1,\cdots,t_d,s_1,\cdots,s_d)\in\mathbb R^{2d}\}.
$$
Hence we can deduce that $Cor(\frak g_d^*)$ is the closure of the
set
$$\{ad^*_X(\ell),~(\ell)\in\Omega_d \text { and }
X\in Vect\{Y_{2k-1},X_k, ~~1\leq k\leq d\}\}.$$ Now let
$\ell=(z_i,y_j,x_k)\in\Omega_d$ and $\xi\in\mathcal O$, with
$\xi=\sum_{i=1}^d\lambda_iZ_i^*+\sum_{j=1}^{2d}\gamma_iY_i^*+\sum_{k=1}^d\beta_kX_k^*$,
then one has
$$\xi=\left\{
    \begin{array}{ll}
      \lambda_i &= z_i, \hbox { if }  i=1,\cdots,d; \\
      \gamma_{2j-1}&=y_{2j-1}-s_jz_1, \hbox { if } j=1,\cdots, d-1 ; \\
      \gamma_{2j} &=y_{2j}-s_jz_{j+1}, \hbox { if } j=1,\cdots, d-1; \\
      \gamma_{2d-1}&=y_{2d-1}-s_dz_1; \\
       \gamma_{2d}&=y_{2d}-s_d(z_2+\cdots+z_d); \\
      \beta_k&= x_k+t_kz_1, \hbox { if } k=1,\cdots,d.
    \end{array}
  \right.
$$
On other hand, one can see that the tangent space to $\mathcal O$ at
$\ell$ is
$$ T_\ell\mathcal O=\{ad^*_X(\ell), X\in
VectVect\{Y_{2k-1},X_k, ~~1\leq k\leq d\}\},$$ and hence any element
in this space has coordinates
$$
\left\{
    \begin{array}{ll}
      \lambda_i= & 0, \hbox { if }  i=1,\cdots,d; \\
      \gamma_{2j-1}&=-s_jz_1, \hbox { if } j=1,\cdots, d-1 ; \\
      \gamma_{2j} &=-s_jz_{j+1}, \hbox { if } j=1,\cdots, d-1;; \\
      \gamma_{2d-1}&=-s_dz_1; \\
       \gamma_{2d}&=-s_d(z_2+\cdots+z_d); \\
      \beta_k&= t_kz_1, \hbox { if } k=1,\cdots,d.
    \end{array}
  \right.
$$
Then we conclude
$$ Cor(\frak g_d^*)=\{\ell=(z_i,y_j,x_k)\in\frak
g_d^*: z_i=0,y_{2d-1}(\sum_{i=1}^{d-1}y_{2i}\prod_{j=1, j\neq
i}^{d-1}y_{2j-1})-y_{2d}\prod_{j=1}^{d-1}y_{2j-1}=0\}.
$$
\end{proof}

\begin{cor}
For each integer $d\geq2$ let $\frak z_d$ denotes the center of the
Lie algebra $\frak g_d$. Then for any $d\geq 2$ one has
$$
Cor(\frak g_d^*)\subsetneq ICor(\frak g_d^*)=\frak z^\perp.
$$
\end{cor}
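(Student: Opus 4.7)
The plan is to prove the corollary in two clean steps: first establish $ICor(\frak g_d^*) = \frak z^\perp$ by analyzing the generators of the invariant polynomial algebra computed in Proposition~\ref{parametrization}(iii), and then produce an explicit point in $\frak z^\perp$ whose coordinates violate the defining polynomial of $Cor(\frak g_d^*)$ exhibited in the preceding theorem.

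For the first equality, I would start from the definition $ICor(\frak g_d^*) = \{\ell : P(\ell) = P(0) \text{ for all } P \in Pol(\frak g_d^*)^G\}$. By Proposition~\ref{parametrization}(iii), the algebra $Pol(\frak g_d^*)^G$ is generated by $z_1, \dots, z_d$ together with the higher-degree polynomials $z_1 y_{2k} - z_{k+1} y_{2k-1}$ ($k = 1, \dots, d-1$) and $z_1 y_{2d} - (z_2 + \cdots + z_d) y_{2d-1}$. Every generator vanishes at the origin, so $ICor(\frak g_d^*)$ is exactly the common zero locus of these generators. The linear generators $z_1, \dots, z_d$ cut out precisely $\frak z^\perp$, and on $\frak z^\perp$ every monomial of every remaining generator is divisible by some $z_i$, hence vanishes automatically. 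This gives $ICor(\frak g_d^*) = \frak z^\perp$.

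For the strict inclusion, I would exhibit one concrete counterexample. By the theorem, an element $\ell = (z_i, y_j, x_k) \in Cor(\frak g_d^*)$ must satisfy $z_1 = \cdots = z_d = 0$ together with the homogeneous degree-$d$ relation
\[
F(\ell) := y_{2d-1}\Bigl(\sum_{i=1}^{d-1} y_{2i}\prod_{\substack{j=1 \\ j \neq i}}^{d-1} y_{2j-1}\Bigr) - y_{2d}\prod_{j=1}^{d-1} y_{2j-1} = 0.
\]
Take $\ell^{(0)} \in \frak z^\perp$ with $y_{2j-1} = 1$ for $1 \leq j \leq d-1$, $y_{2d-1} = 0$, $y_{2d} = 1$, and all remaining coordinates ($y_{2i}$ for $1 \leq i \leq d-1$, $z_i$, $x_k$) equal to zero. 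Then the first summand of $F(\ell^{(0)})$ vanishes because $y_{2d-1} = 0$, while the second summand equals $\prod_{j=1}^{d-1} 1 = 1$, so $F(\ell^{(0)}) = -1 \neq 0$. Hence $\ell^{(0)} \in \frak z^\perp = ICor(\frak g_d^*)$ but $\ell^{(0)} \notin Cor(\frak g_d^*)$, which yields the strict containment for every $d \geq 2$.

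There is no serious obstacle: both steps follow directly from results already established in the excerpt, and the entire argument reduces to inspecting the generator list of Proposition~\ref{parametrization}(iii) and evaluating the polynomial $F$ of the theorem at a single well-chosen point.
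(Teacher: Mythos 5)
Your proof is correct: the paper states this corollary without proof, and your two steps (reading off $ICor(\frak g_d^*)=\frak z^\perp$ from the generators in Proposition~\ref{parametrization}(iii), all of whose nonlinear members vanish identically once $z_1=\dots=z_d=0$, then exhibiting a point of $\frak z^\perp$ where the degree-$d$ polynomial $F$ from the theorem equals $-1$) are exactly the intended argument. The only cosmetic remark is that the paper's generator list has a minor indexing typo ($z_{d-1}y_{2d-3}$ versus the pattern $z_1y_{2k}-z_{k+1}y_{2k-1}$), which does not affect your conclusion.
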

\subsection{Concluding remarks}

\begin{itemize}

\item[1.] Let us remark that the cross-section mapping $P_d:\Omega_d\rightarrow\Sigma_d$ is as follows
$$
P_d:\ell=(z_i,y_j,x_k)  \mapsto
P(\ell)=\sum_{i=1}^dz_iZ_i^*+\sum_{i=1}^{d-1}(y_{2i}-\frac{z_{i+1}}{z_i}y_{2i-1})Y_{2i}^*+
(y_{2d}-\frac{z_1+\dots+z_d}{z_1})Y_{2d}^*.
$$

\item[2.] I believe that the family of Lie algebras
$(\g_d)_{d\geq 2}$ is optimal in the sense that if for some
$k$-dimensional two-step nilpotent Lie algebra $\frak g$ the cortex
of the dual of $\frak g^*$ satisfies
$$ Cor(\frak
g^*)=\{\ell\in\frak z^\perp,~ P_i(\ell)=0, \text { with each  } P_i
\text { is a homogeneous polynomial of degree } k\}, $$ then $\dim
\frak z\geq k$ and $\dim{\frak g}\geq 4k$ where $\frak z$ is the
center of $\frak g$.

\end{itemize}


\begin{thebibliography}{99}

\vskip 0.5 cm
\bibitem{ABCD} {\sc D. Arnal, M. Ben Ammar, B. Currey, and
B. Dali},  {\it Canonical coordinates for coadjoint orbits of
completely solvable Lie groups}, Jour. Lie Theory No. {\bf 2}
(2005), 521--560


\bibitem{ACD} {\sc D. Arnal, B. Currey, and
B. Dali},  {\it Construction of Canonical Coordinates for
Exponential Lie Groups}, Trans. Amer. Math. Soc. {\bf 361} (2009),
No. {\bf 12}, 6283-6348.


\bibitem{Bak1}
{\sc A. Baklouti}, {\it Le cortex en dimension six}, Travaux math.
Luxembourg 5 (1993), 7-45.



\bibitem{Bekka}
{\sc M.E.B. Bekka, and E. Kaniuth}, {\it Irreductible representation
of locally compact group that cannot be Hausdorff separated from the
identity representation},  J. Reine Angew. Math., {\bf 385} (1988),
203-220.



\bibitem{Boidol} {\sc J. Boidol, J. Ludwig, and D. M\"{u}ller},
{\it On infinitely small orbits}, Studia Math.  T. {\bf LXXXVIII}
(1988), 1-11.



\bibitem{Dali}
{\sc B. Dali}, {\it  On the cortex of a class of exponential Lie
algebras}, Jour. Lie Theory No. {\bf 22} (2012), 845--867.

\bibitem{Fell}
{\sc J.M.G. Fell}, {\it Weak containment and induced representations
of groups}, Canad. J. Math. {\bf 14} (1962), 237-268.



\bibitem{Vershik}
{\sc A.M. Vershik and S.I. Karpushev}, {\it Cohomology of groups in
unitary representations, the neighbourhood of the identity and
conditionally positive definite functions}, Math. USSR Sbornik {\bf
{47}} (1984), 513-526.



\end{thebibliography}
\end{document}